\newtheorem{theorem}{Theorem}[section]
\newtheorem{lemma}[theorem]{Lemma}
\newtheorem{proposition}[theorem]{Proposition}
\newtheorem{definition}[theorem]{Definition}
\newtheorem{corollary}[theorem]{Corollary}
\newtheorem{prop}{Proposition}[section]
\newtheorem{remark}[prop]{Remark}
\def\*#1{\mathbf{#1}}
\def\gperp{{\nabla^\perp}}
\def\R{{\tilde{R}}}
\def\vp{{\varphi}}
\def\pR{{{}^\perp \!R}}
\def\tr{{\mathrm{tr}\,}}
\def\tmperp{{\left(TM\right)^\perp}}
\def\He{\mathrm {Hess}\,}
\subjclass[2000]{Primary 53C42, 53C40, 52B25. }
\date{\today}
\begin{document}
\raggedbottom

\title{Parallel Codazzi tensors with submanifold applications}

\author{Anthony Gruber}


\begin{abstract} 
A decomposition theorem is established for a class of closed Riemannian submanifolds immersed in a space form of constant sectional curvature.  In particular, it is shown that if $M$ has nonnegative sectional curvature and admits a Codazzi tensor with ``parallel mean curvature'', then $M$ is locally isometric to a direct product of irreducible factors determined by the spectrum of that tensor. This decomposition is global when $M$ is simply connected, and generalizes what is known for immersed submanifolds with parallel mean curvature vector.

\vspace{1pc}

{\bf Keywords:} Codazzi tensors, submanifold immersions, space forms, parallel mean curvature, harmonic curvature
\end{abstract}

\maketitle
\tableofcontents

\section{Introduction}
A Codazzi tensor on the Riemannian manifold $M$ is a symmetric (0,2)-tensor field $\vp$ whose covariant derivative $\nabla\vp$ is totally symmetric along $M$ (c.f. section 2). Though defined algebraically, this notion is motivated by a fundamental compatibility condition from the geometry of immersed submanifolds.  Indeed, when $M$ is a hypersurface immersed in a locally symmetric space, the classical Codazzi equation reduces to precisely the requirement that the second fundamental form $h$ be a Codazzi tensor.

It is well-known that careful study of the second fundamental form is essential for determining both intrinsic and extrinsic properties of submanifold immersions (see e.g. \cite{chern1968,simons1968,yano1971,yau1975,chen2010}), so it is natural to suspect that other Codazzi tensors may also be useful for this purpose.  Consequently, there has been much investigation into the structure and properties of Codazzi tensors in the abstract which often complements the study of immersions, e.g. \cite{ferus1981,simon1981,bourguignon1981,derdzinski1983,catino2015,shandra2019} and the references therein.  In particular, it is known that any Codazzi tensor $\vp$ on a complete Riemannian manifold $M$ commutes with the Ricci tensor \cite{bourguignon1981}, and that if $M$ has constant sectional curvature $K$ then $\vp$ admits the local expression $\vp_f = \He f + Kf\,\mathrm{Id}_M$ for some smooth function $f: M \to \mathbb{R}$  \cite{ferus1981}.  Moreover, when $M$ is closed with nonnegative sectional curvature, it has recently been shown that every trace-free Codazzi tensor is invariant under parallel translations \cite{shandra2019} \textemdash which is well-known when the tensor in question is the second fundamental form of a minimal immersion \cite{yano1971}. 

Despite their high geometric relevance, the structure of manifolds which admit Codazzi tensors is comparatively less understood.  In particular, given a manifold $M$ which admits a Codazzi tensor, almost nothing is known about the global structure of $M$ regardless of hypotheses.  However, some local results have been demonstrated. For example, when $M$ is complete, the leaves of the eigenspace distribution generated by any Codazzi tensor are locally integrable and umbilic in $M$ \cite{derdzinski1981}. Moreover, whenever $M$ has dimension at least three and admits a non-parallel Codazzi tensor with constant trace and exactly two distinct eigenvalues, $M$ is locally isometric to a warped product \cite{derdzinski1981}.

The contribution of this work is to establish a global decomposition for a certain class of closed and simply connected submanifolds $M^n$ immersed in a space form $\mathbb{M}^{n+p}(c)$ of constant sectional curvature $c\in\mathbb{R}$ and admitting a Codazzi tensor with respect to $\mathbb{M}(c)$.  In particular, if $M$ has everywhere nonnegative sectional curvature and satisfies a parallelism condition on its ``mean curvature vector'' (c.f. section 2), then the following is shown.

\begin{theorem}\label{thm:main}
Let $M^n \subset \mathbb{M}^{n+p}(c)$ be a closed submanifold with nonnegative sectional curvature, and let $\vp$ be a Codazzi tensor on $M$.  If the ``mean curvature vector'' $\bm{\Phi} = \phi \,\*e_{n+1}$ is parallel in $\tmperp$, then
\[|\nabla\vp^{n+1}| = 0, \qquad  \sum R^i_{jij}\left(\lambda_i^{n+1} - \lambda_j^{n+1}\right)^2 = 0.\]
Moreover, $M$ is locally isometric to the product $M = M_1 \times M_2 \times ... \times M_l$ where $l$ is the number of distinct eigenvalues of $\vp^{n+1}$ and $TM_i$ is spanned by those eigenvectors which have eigenvalue $\lambda_i^{n+1}$.  If $M$ is simply connected, then this statement is global.
\end{theorem}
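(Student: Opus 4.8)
The plan is to show first that $\vp^{n+1}$ is a harmonic Codazzi tensor on $M$, then run a Bochner argument to extract the two displayed identities, and finally invoke the de Rham splitting theorem to obtain the product decomposition. The starting point is to unpack the parallelism hypothesis $\gperp\bm{\Phi}=0$ with $\bm{\Phi}=\phi\,\*e_{n+1}$. Since $\gperp(\phi\,\*e_{n+1})=(d\phi)\,\*e_{n+1}+\phi\,\gperp\*e_{n+1}=0$, the unit normal $\*e_{n+1}$ is $\gperp$-parallel and the function $\phi$ is constant on $M$; as $\phi$ is (a multiple of) $\tr\vp^{n+1}$, this forces $\tr\vp^{n+1}$ to be constant. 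The vanishing $\gperp\*e_{n+1}=0$ also guarantees that the component $\vp^{n+1}=\langle\vp,\*e_{n+1}\rangle$ is a genuine \emph{intrinsic} Codazzi tensor on $M$, so that the intrinsic Bochner machinery applies directly.

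Granting this, I would next show $\vp^{n+1}$ is harmonic. Because $\vp^{n+1}$ is Codazzi, its divergence satisfies
\[(\delta\vp^{n+1})(Y)=-\sum_i(\nabla_{e_i}\vp^{n+1})(e_i,Y)=-\sum_i(\nabla_Y\vp^{n+1})(e_i,e_i)=-d(\tr\vp^{n+1})(Y),\]
where the middle equality uses total symmetry of $\nabla\vp^{n+1}$. Since $\tr\vp^{n+1}$ is constant, $\delta\vp^{n+1}=0$; combined with the closedness $d^\nabla\vp^{n+1}=0$ built into the Codazzi condition, the tensor $\vp^{n+1}$ (viewed as a $T^*M$-valued $1$-form) is harmonic, so its Hodge Laplacian vanishes.

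I would then apply the Weitzenböck formula $0=\nabla^*\nabla\vp^{n+1}+\mathcal{R}(\vp^{n+1})$. Pairing with $\vp^{n+1}$ and integrating over the closed manifold $M$, the divergence theorem eliminates the $\Delta|\vp^{n+1}|^2$ contribution, and after diagonalizing $\vp^{n+1}$ so that the curvature term assumes its sectional-curvature form, one is left with
\[0=\int_M|\nabla\vp^{n+1}|^2+\int_M\sum R^i_{jij}\bigl(\lambda_i^{n+1}-\lambda_j^{n+1}\bigr)^2.\]
Because $M$ has nonnegative sectional curvature, each summand $R^i_{jij}(\lambda_i^{n+1}-\lambda_j^{n+1})^2$ is nonnegative, as is $|\nabla\vp^{n+1}|^2$; two nonnegative continuous integrands whose sum integrates to zero must each vanish identically, yielding the two stated identities. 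For the decomposition, $\nabla\vp^{n+1}=0$ makes $\vp^{n+1}$ parallel, so its eigenvalues are constant and the orthogonal eigendistributions $E_i=\ker(\vp^{n+1}-\lambda_i^{n+1}\,\mathrm{Id})$ are parallel and span $TM$. Since $M$ is closed, hence complete, the de Rham theorem gives a local Riemannian splitting $M=M_1\times\cdots\times M_l$ with $TM_i=E_i$, which becomes global when $M$ is simply connected.

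The main obstacle I anticipate is the Weitzenböck step: establishing the identity with precisely the curvature term $\sum R^i_{jij}(\lambda_i^{n+1}-\lambda_j^{n+1})^2$ and the correct sign. Because $\vp$ is Codazzi relative to the ambient space form $\mathbb{M}(c)$ rather than a purely intrinsic object, the delicate point is to justify rigorously that $\gperp\*e_{n+1}=0$ reduces $\vp^{n+1}$ to an intrinsic Codazzi tensor, so that the intrinsic curvature $R^i_{jij}$ is what appears, and to verify that the sign convention makes nonnegative sectional curvature produce a definite-sign integrand.
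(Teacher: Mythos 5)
Your proposal is correct and follows the same overall skeleton as the paper's proof---an integrated Bochner identity forces $|\nabla\vp^{n+1}|=0$ together with the vanishing of the curvature term, and de Rham's theorem then yields the splitting---but both key steps are executed by genuinely different means. Where the paper derives the identity $\tfrac12\Delta|\vp^{n+1}|^2=|\nabla\vp^{n+1}|^2+\tfrac12\sum R^i_{jij}\left(\lambda^{n+1}_i-\lambda^{n+1}_j\right)^2$ by a direct moving-frames computation of the rough Laplacian (Lemmas~\ref{lem:commutator} and~\ref{lem:simons}), you obtain the same integral identity from the Hodge-theoretic fact that a Codazzi tensor with constant trace is a harmonic $TM$-valued $1$-form, followed by the Weitzenb\"ock formula; this is essentially the classical Berger--Besse argument (cf.\ \cite[Theorem 16.9]{besse2007}, which the paper itself invokes in a later remark), and your identification of the one nontrivial prerequisite is exactly right: the condition $\gperp\*e_{n+1}=0$ kills the normal-connection term $\vp^\beta_{ij}\omega^{n+1}_\beta$ in $\vp^{n+1}_{ij,k}$, so $\vp^{n+1}$ becomes a genuinely \emph{intrinsic} constant-trace Codazzi tensor and the intrinsic machinery applies. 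The only work you defer is the verification that the Weitzenb\"ock curvature term, paired with $\vp^{n+1}$ in a diagonalizing frame, equals $\sum_{i<j}R^i_{jij}\left(\lambda^{n+1}_i-\lambda^{n+1}_j\right)^2$ with the sign that makes nonnegative sectional curvature favorable; that computation is precisely the content of the paper's Lemma~\ref{lem:simons}, so you should either carry it out or cite it explicitly rather than leave it as an anticipated obstacle. For the splitting you are actually more economical than the paper: from $\nabla\vp^{n+1}=0$ you read off directly that the eigenvalues are constant and the eigendistributions are invariant under parallel transport, whereas the paper routes through the structure equations (obtaining $(\lambda_i-\lambda_j)\,\omega^i_j=0$) and Derdzinski's Lemma~\ref{lem:andrzej} to establish total geodesy of the leaves before invoking de Rham; your version makes that lemma unnecessary.
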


This result provides local-to-global information about the structure of submanifolds admitting Codazzi tensors, and confirms that submanifolds with parallel mean curvature (with respect to any Codazzi tensor) are somewhat rigid\textemdash a fact which is known already for the second fundamental form \cite{yano1971, yau1975}.  The sequel is devoted to the proof of this result, as well as the extraction of a few  Corollaries which elucidate the structure of manifolds admitting Codazzi tensors in the presence of additional information.

\vspace{1pc}

\textbf{Acknowledgements:}  The author would like to acknowledge Prof. Magdalena Toda, whose attention and advice during the preparation of this manuscript was invaluable to its completion.

\section{Preliminaries}
Some basics of submanifold geometry are now recalled in the formalism of Chern \cite{chern1968}.  Let $M^n$ be a closed (i.e. compact without boundary)  $n$-dimensional Riemannian manifold isometrically immersed in the $(n+p)$-dimensional space form $\mathbb{M}^{n+p}(c)$ of constant sectional curvature $c\in\mathbb{R}$.  Around any point $x\in M$, choose a local orthonormal basis $\*e_1,...,\*e_n,\*e_{n+1},...,\*e_{n+p}$ for $T\mathbb{M}(c)$ such that $\*e_1,...,\*e_n$ form a local orthonormal basis for $TM$.  Adopting the index conventions
\begin{equation*}
    1\leq A,B,C \leq n+p, \qquad 1 \leq i,j,k \leq n, \qquad n+1\leq \alpha,\beta,\gamma \leq n+p,
\end{equation*}
it follows that there are dual one-forms $\omega^A$ defined by $\omega^A(\*e_B) = \delta^A_B$ which form a (local) basis for the cotangent bundle $T^*\mathbb{M}(c)$. The structure equations for the natural Levi-Civita connection on $\mathbb{M}(c)$ are then expressed as
\begin{align*}
    d\omega^A + \sum_B \omega^A_B \wedge \omega^B &= 0,  \\
    d\omega^A_B + \sum_C\omega^A_C\wedge\omega^C_B &= \frac{1}{2}\sum_{C,D}\R^A_{BCD}\,\omega^C\wedge\omega^D,
\end{align*}
where $d$ is the exterior derivative and $\R$ is the Riemann curvature tensor on $\mathbb{M}(c)$.


It is straightforward to verify (see e.g. \cite{kobayashi1963}) that the immersion of $M$ in $\mathbb{M}(c)$ induces connections on $TM$ and $TM^\perp$ given by appropriate restrictions of the above.  Denoting the Riemann curvature tensors of these connections by $R$ and $\pR$, respectively, there are the venerable submanifold equations of Gauss, Codazzi, and Ricci, stated as the following lemma.
\begin{lemma}\label{lem:stdeqns}
Let $h\in T^*M \otimes T^*M \otimes \left(TM\right)^\perp$ denote the second fundamental form of $M \subset \mathbb{M}(c)$.  In the notation above,
\begin{align}
    R^i_{jkl} &= \tilde{R}^i_{jkl} + \sum_\alpha h_{ik}^\alpha h_{jl}^\alpha - h_{il}^\alpha h_{jk}^\alpha, \qquad \mathrm{(Gauss\, equation)} \label{eq:gauss} \\
    h_{ij,k}^\alpha &= h_{ik,j}^\alpha + \tilde{R}^\alpha_{ikj}, \qquad\qquad\qquad\qquad \mathrm{(Codazzi\,equation)} \label{eq:codazzi} \\
    \pR^\alpha_{\beta jk} &= \tilde{R}^{\alpha}_{\beta jk} + \sum_i h_{ij}^\alpha h_{ik}^\beta - h_{ik}^\alpha h_{ij}^\beta. \qquad \mathrm{(Ricci \, equation)} \label{eq:ricci}
\end{align}
\end{lemma}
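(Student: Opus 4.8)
The plan is to separate the argument into an analytic half, which produces the two displayed identities, and a geometric half, which extracts the product structure from the first of them. I work throughout with the single slice $\vp^{n+1}$ of the Codazzi tensor along $\*e_{n+1}$, a symmetric $(0,2)$-tensor on $M$, diagonalized at a fixed point so that $\vp^{n+1}_{ij}=\lambda^{n+1}_i\delta_{ij}$ in the adapted frame. The first task is to convert the hypothesis on $\bm{\Phi}$ into usable intrinsic data. Expanding $\nabla^\perp\bm{\Phi}=0$ in the frame yields both $d\phi=0$ and $\phi\,\omega^\alpha_{n+1}=0$ for $\alpha\neq n+1$; away from the locus $\phi=0$ this forces the normal connection to decouple $\*e_{n+1}$ from the remaining normal directions, so that the covariant derivative of $\vp^{n+1}$ collapses to the Levi-Civita derivative on $M$. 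Since the ambient mixed curvature $\R^{n+1}_{ikj}$ vanishes in a space form, the Codazzi property of $\vp$ (the analogue of \eqref{eq:codazzi}) then reads $\vp^{n+1}_{ij,k}=\vp^{n+1}_{ik,j}$, exhibiting $\vp^{n+1}$ as a genuine Codazzi tensor on $M$ whose trace $\tr\vp^{n+1}=n\phi$ is constant, hence one that is divergence free.

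The core of the analytic half is a Bochner/Simons identity for this tensor. I would compute the rough Laplacian $\Delta\vp^{n+1}_{ij}=\sum_k\vp^{n+1}_{ij,kk}$, use the Codazzi symmetry to trade an index, commute the two trailing derivatives with the Ricci identity for $(0,2)$-tensors, and observe that the surviving divergence term $\sum_k\vp^{n+1}_{ik,kj}=(\tr\vp^{n+1})_{,ij}$ vanishes by constancy of the trace. Contracting against $\vp^{n+1}$ and diagonalizing collects the curvature contributions into a sum of sectional curvatures weighted by squared eigenvalue gaps, giving
\[
\tfrac12\,\Delta\bigl|\vp^{n+1}\bigr|^2=\bigl|\nabla\vp^{n+1}\bigr|^2+\tfrac12\sum_{i,j}R^i_{jij}\bigl(\lambda^{n+1}_i-\lambda^{n+1}_j\bigr)^2 .
\]
Because $R^i_{jij}\ge 0$ under the curvature hypothesis, the last term is nonnegative; integrating over the closed manifold $M$ annihilates the left-hand side, so the two nonnegative terms on the right have vanishing integrals and therefore vanish pointwise. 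This yields simultaneously $|\nabla\vp^{n+1}|=0$ and $\sum R^i_{jij}(\lambda^{n+1}_i-\lambda^{n+1}_j)^2=0$, the two asserted identities.

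For the geometric half, $\vp^{n+1}$ is now parallel, so raising an index produces a parallel self-adjoint endomorphism field. Its eigenvalues $\lambda^{n+1}_1,\dots,\lambda^{n+1}_l$ are then constant on the connected manifold $M$, and the eigenspaces $E_i=\ker(\vp^{n+1}-\lambda^{n+1}_i\,\mathrm{Id})$ form mutually orthogonal parallel distributions; each $E_i$ is integrable with totally geodesic leaves, as is its orthogonal complement. The local de Rham splitting theorem then presents a neighborhood of each point as a Riemannian product $M_1\times\cdots\times M_l$ with $TM_i=E_i$, and when $M$ is simply connected the completeness guaranteed by compactness promotes this to a global isometry of the asserted form.

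The step I expect to demand the most care is the Bochner computation together with its prerequisite reduction. I must check that the normal-connection terms genuinely drop out of $\nabla\vp^{n+1}$ rather than leaving residual couplings (and treat, or excise through the trace-free theory, the degenerate set $\phi=0$), and that the curvature contribution reassembles \emph{exactly} into $\sum R^i_{jij}(\lambda^{n+1}_i-\lambda^{n+1}_j)^2$ with the sign that makes nonnegative sectional curvature decisive—any sign error here would invert the conclusion.
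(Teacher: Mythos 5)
You have proved the wrong statement. The lemma under review is Lemma~\ref{lem:stdeqns}, which asks for a derivation of the Gauss, Codazzi, and Ricci equations \eqref{eq:gauss}--\eqref{eq:ricci} of an immersed submanifold $M \subset \mathbb{M}(c)$ from the structure equations of the immersion. Your proposal instead sketches a proof of Theorem~\ref{thm:main}: it runs the Bochner/Simons computation for $\vp^{n+1}$, integrates over the closed manifold, and invokes the de Rham splitting theorem. None of that has any bearing on Lemma~\ref{lem:stdeqns}; worse, your argument explicitly \emph{uses} the Codazzi equation (you cite ``the Codazzi property of $\vp$ (the analogue of \eqref{eq:codazzi})'') and, implicitly through the commutation identity of Lemma~\ref{lem:commutator}, the Gauss and Ricci equations as well. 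As a proof of the lemma it is therefore circular, and as written it contains no step that establishes any of the three displayed identities.

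What the lemma actually requires, and what the paper's Appendix supplies, is a direct comparison of structure equations. Along $M$ one has $\omega^\alpha = 0$, so differentiating gives $0 = d\omega^\alpha = -\sum_i \omega^\alpha_i \wedge \omega^i$, and Cartan's lemma produces $\omega^\alpha_i = \sum_j h^\alpha_{ij}\,\omega^j$ with $h^\alpha_{ij}$ symmetric. One then pulls back the ambient second structure equation $d\omega^A_B + \sum_C \omega^A_C \wedge \omega^C_B = \tfrac12 \sum \R^A_{BCD}\,\omega^C \wedge \omega^D$ to $M$ and compares, block by block, with the intrinsic structure equations of the induced connections: the tangential block $(i,j)$ yields \eqref{eq:gauss} after skew-symmetrization in $k,l$; the mixed block $(\alpha,i)$, together with the definition of the covariant derivative $h^\alpha_{ij,k}$, yields \eqref{eq:codazzi}; and the normal block $(\alpha,\beta)$, compared with the structure equation for $\gperp$ whose curvature is $\pR$, yields \eqref{eq:ricci}. (An equivalent invariant route decomposes $\tilde\nabla_X Y$ into tangential and normal parts and computes the ambient curvature tensor directly.) Your proposal contains no trace of either route, so the gap is not a fixable detail but a complete absence of the required argument.
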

\begin{proof}
A calculation is provided, for convenience, in the Appendix to this manuscript.
\end{proof}

Recall that a powerful technique for studying the geometry of submanifolds involves computation of the metric Laplace operator.  In particular, once commutation relations are established for derivatives along the submanifold, self-adjointness of the Laplacian can be used along with knowledge of the curvature tensor in order to conclude results about submanifold structure.  Due originally to Bochner \cite{bochner1969} and later popularized by Simons \cite{simons1968} and Chern \cite{chern1968}, this technique has been widely utilized for studying the behavior of closed hypersurface immersions \cite{chern1968, cheng1977, chen2010}.  Fortunately, similar analysis can also prove useful for studying more general submanifolds which admit Codazzi tensors.

To that end, note that the ambient manifold $\mathbb{M}(c)$ has constant sectional curvature $c$, so that the tensor $\R$ admits the simplified expression (c.f \cite[Chapter 5]{kobayashi1963})
\begin{equation}\label{eq:Rspaceform}
\R^A_{BCD} = \left(\delta^A_C\,\delta_{BD} - \delta^A_D\,\delta_{BC}\right)c.
\end{equation}
Using this, it follows immediately from the Codazzi equation (\ref{eq:codazzi}) that the covariant derivatives of the second fundamental form satisfy  $h_{ij,k}^\alpha = h_{ik,j}^\alpha$ on $M$  and hence $\nabla h$ is totally symmetric.  This motivates the following standard definition.

\begin{definition}
A tensor field $\vp \in T^*M \otimes T^*M \otimes \left(TM\right)^\perp$ which is symmetric in its lower indices is said to satisfy the Codazzi equation with respect to $\mathbb{M}(c)$ provided
\[\vp^\alpha_{ij,k} =\vp^\alpha_{ik,j},\]
for all indices $i,j,k,\alpha$.  In this case, $\vp$ is said to be a Codazzi tensor on the submanifold $M \subset \mathbb{M}(c)$.
\end{definition}

Moreover, there is the following commutation result for derivatives which is valid for any symmetric tensor field on $M$.

\begin{lemma}\label{lem:commutator}
Let $\vp \in T^*M \otimes T^*M \otimes \left(TM\right)^\perp $ be symmetric in its lower indices.  Then, the Hessian of $\varphi$ obeys the following commutation rule,
\begin{equation*}
    \vp^\alpha_{ij,kl} - \vp^\alpha_{ij,lk} = \sum_{\beta,m}\left(\vp^\beta_{ij}\pR^\alpha_{\beta lk} - \vp^\alpha_{mj}R^m_{ilk} - \vp^\alpha_{im}R^m_{jlk}\right).
\end{equation*}
\end{lemma}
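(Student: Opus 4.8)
The plan is to derive the identity by exterior differentiation of the defining relations for the covariant derivatives, staying within the moving-frame formalism of Chern set up above. The governing principle is the Ricci identity: when two covariant derivatives along tangent directions are commuted, each index of $\vp$ is acted upon by the curvature of the connection appropriate to that index. Since $\vp^\alpha_{ij}$ carries two covariant tangent indices $i,j$ and one contravariant normal index $\alpha$, the defect should consist of two terms built from the tangent curvature $R$ (one for $i$, one for $j$) and one term built from the normal curvature $\pR$ (for $\alpha$), with the covariant indices contributing a minus sign and the contravariant index a plus sign.

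First I would record the defining relation for the first covariant derivative as an equation of one-forms,
\begin{equation*}
\sum_k \vp^\alpha_{ij,k}\,\omega^k = d\vp^\alpha_{ij} + \sum_\beta \vp^\beta_{ij}\,\omega^\alpha_\beta - \sum_m \vp^\alpha_{mj}\,\omega^m_i - \sum_m \vp^\alpha_{im}\,\omega^m_j,
\end{equation*}
in which the lower indices pair with the tangent connection forms $\omega^m_i,\omega^m_j$ and the normal index pairs with the normal connection form $\omega^\alpha_\beta$. Differentiating the frame coefficients once more produces the analogous relation for the second covariant derivative,
\begin{equation*}
\sum_l \vp^\alpha_{ij,kl}\,\omega^l = d\vp^\alpha_{ij,k} + \sum_\beta \vp^\beta_{ij,k}\,\omega^\alpha_\beta - \sum_m \vp^\alpha_{mj,k}\,\omega^m_i - \sum_m \vp^\alpha_{im,k}\,\omega^m_j - \sum_m \vp^\alpha_{ij,m}\,\omega^m_k,
\end{equation*}
where the final term accounts for the newly introduced tangent differentiation index $k$.

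Next I would apply the exterior derivative $d$ to the first relation and substitute the structure equations. Differentiating the left-hand side introduces $d\omega^k = -\sum_m \omega^k_m\wedge\omega^m$, while differentiating the right-hand side introduces the curvature two-forms through the restricted structure equations
\begin{equation*}
d\omega^m_i + \sum_n \omega^m_n\wedge\omega^n_i = \tfrac12\sum_{k,l} R^m_{ikl}\,\omega^k\wedge\omega^l, \qquad d\omega^\alpha_\beta + \sum_\gamma \omega^\alpha_\gamma\wedge\omega^\gamma_\beta = \tfrac12\sum_{k,l} \pR^\alpha_{\beta kl}\,\omega^k\wedge\omega^l,
\end{equation*}
obtained from the ambient structure equation by projecting onto the tangent and normal parts. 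Using the second relation above to replace each occurrence of $d\vp^\alpha_{ij,k}$, the exact (non-curvature) pieces and the quadratic wedge products of connection forms cancel in pairs, leaving an identity among two-forms whose only surviving contributions are the curvature two-forms contracted against $\vp$. Reading off the coefficient of $\omega^l\wedge\omega^k$ then yields the stated formula, the factor $\tfrac12$ being absorbed by the antisymmetrization inherent in $\vp^\alpha_{ij,kl} - \vp^\alpha_{ij,lk}$.

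The main obstacle is purely the bookkeeping of signs and index positions. One must verify that the auxiliary connection term $-\sum_m \vp^\alpha_{ij,m}\,\omega^m_k$ attached to the differentiation index $k$ contributes nothing to the final curvature expression (it merely guarantees tensoriality of the second derivative), and that the cross terms between connection forms cancel precisely against the $\omega\wedge\omega$ pieces of the structure equations. Once this cancellation is confirmed, matching the coefficient of $\omega^l\wedge\omega^k$ assigns the normal curvature $\pR^\alpha_{\beta lk}$ to the contravariant index with a plus sign and the tangent curvatures $R^m_{ilk},R^m_{jlk}$ to the two covariant indices with minus signs, reproducing the claimed commutation rule exactly.
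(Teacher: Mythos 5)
Your proposal is correct and follows essentially the same route as the paper: both apply the exterior derivative to the one-form relation defining $\nabla\vp$ and read off the coefficient of $\omega^l\wedge\omega^k$ after substituting the structure equations. The only organizational difference is that you invoke the intrinsic structure equations of the induced tangent and normal connections directly (producing $R$ and $\pR$ at once), whereas the paper substitutes the pulled-back ambient structure equations, picks up terms quadratic in the second fundamental form, and then converts to $R$ and $\pR$ via the Gauss and Ricci equations; the resulting identity is the same.
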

\begin{proof}
The necessary computation can be found in the Appendix.
\end{proof}

To apply the aforementioned technique of computing the Laplace operator to the present situation, it must be extended to submanifold immersions with higher codimension. Here, it is advantageous to assume some control on the normal bundle $\tmperp$.  There are several reasonable assumptions that can be made to this end, but the two most natural for the current purposes are:
\begin{enumerate}
    \item That $\tmperp$ is locally parallelizable (i.e. flat),
    \item That the mean curvature vector is parallel in $\tmperp$.
\end{enumerate}
Enforced either separately or together, these conditions have led to interesting results which often generalize their classical codimension-one counterparts (e.g. \cite{zheng1997, yano1971, yau1975, chen2010}). Of course, when $M$ is a hypersurface, $\tmperp$ is flat automatically, and condition (2) is equivalent to $M$ having constant mean curvature (c.f. the discussion below). Both conditions (1) and (2) will be examined in the sequel, however most emphasis will be placed on (2).


To adapt the notion of parallel mean curvature vector for compatibility with more general Codazzi tensors, let $\tr{\vp^\alpha} = \sum_i \vp^\alpha_{ii}$ and recall that when $\vp = h$ is the second fundamental form, the mean curvature vector of $M$ is given by 
\[\*H = \frac{1}{n}\sum\left(\tr{h^\alpha}\right)\*e_\alpha.\]
When studying the geometry of submanifolds, it is useful to adapt the local frame on $\tmperp$ by choosing $\*e_{n+1}$ in the direction of $\*H$ (c.f. \cite{yau1974, zheng1997, chen2010}).  Similarly, it is gainful for the present purpose to choose $\*e_{n+1}$ in the direction of the generalized ``mean curvature vector'',
\[\*\Phi = \frac{1}{n} \sum \left(\tr{\vp^\alpha}\right)\*e_\alpha.\]
With this adaptation, $\*\Phi = \phi\,\*e_{n+1}$ where $\phi = (1/n)\tr{\vp^{n+1}}$ and $\tr{\vp^\alpha} = 0$ for all $\alpha \neq n+1$.  Note that such a choice is always possible away from points where $\*\Phi = \*0$, while if $\*\Phi = \*0$ then $\tr{\vp^\alpha}=0$ for all $\alpha$ in any basis $\{\*e_\alpha\}$ for $\tmperp$.  Moreover, recall that a tensor field is said to be parallel in a vector bundle if its covariant derivative vanishes along sections of this bundle.  It follows that $\*\Phi$ is parallel in $\tmperp$ provided
\[0 = \gperp \*\Phi = d\phi\,\*e_{n+1} + \phi\gperp \*e_{n+1} = d\phi\,\*e_{n+1} + \phi\sum_\alpha \omega^\alpha_{n+1}\*e_\alpha,\]
which implies that $\phi$ is constant and (when $\phi\neq 0$) the connection forms $\omega^{n+1}_\alpha = 0$ for all $\alpha$. Hence, $\*e_{n+1}$ is parallel in $\tmperp$ whenever $\*\Phi \neq \*0$ is also.  Consequently, there is the following definition.

\begin{definition}\label{def:parallelMC}
Let $\vp\in T^*M \otimes T^*M \otimes \tmperp$ be a Codazzi tensor and let $\*\Phi = \sum \left(\tr{\vp^\alpha}\right)\*e_\alpha = \phi \,\*e_{n+1}$ be its mean curvature vector.  Then, $\vp$ is said to have parallel mean curvature provided $d\phi = 0$ and $\omega^{n+1}_\beta = 0$ for all $\beta$.
\end{definition}

\begin{remark}
When $\varphi = h$ is the second fundamental form, $\*\Phi = \*H$ is the mean curvature vector of $M$ and this is precisely the concept of parallel mean curvature which generalizes the CMC condition of hypersurfaces \cite{chen2010}.
\end{remark}

Now, recall that it follows from the Ricci equation (\ref{eq:ricci}) and (\ref{eq:Rspaceform}) that the shape operators $\vp^\alpha = \sum_{i,j}\vp_{ij}^\alpha\, \omega^i \otimes \omega^j$ may be diagonalized simultaneously if and only if the tensor $\pR$ vanishes, i.e. if and only if the normal bundle $\tmperp$ is flat. In this case, there is an orthogonal transformation of $TM$ such that  $\vp^\alpha_{ij} = \lambda_i^\alpha \delta_{ij}$ for all $i,j,\alpha$.  On the other hand, if $\vp$ has parallel mean curvature (but $\tmperp$ is not necessarily flat), then $\*e_{n+1}$ is parallel and the structure equations show that $\pR^{n+1}_{\alpha ij} = 0$ for all $\alpha$.  In this case, there is a basis for $TM$ where $\varphi^{n+1}_{ij} = \lambda^{n+1}_i \delta_{ij}$.  With this, it is now appropriate to compute the metric Laplacian in the case of either assumption (1) or (2) above. 



\begin{lemma}\label{lem:simons}
Suppose that $\vp$ satisfies Codazzi equations relative to the space form $\mathbb{M}^{n+p}(c)$, and let $\*\Phi  =\phi\,\*e_{n+1}$.  If $\vp$ has parallel mean curvature (c.f. Definition~\ref{def:parallelMC}), then
\begin{equation}\label{eq:MCVsimons}
    \frac{1}{2}\Delta|\vp^{n+1}|^2 = |\nabla\vp^{n+1}|^2 + \frac{1}{2}\sum_{i,j} R^i_{jij}\left(\lambda^{n+1}_i - \lambda^{n+1}_j\right)^2.
\end{equation}
On the other hand, suppose $\tmperp$ is flat, but $\*\Phi$ is not necessarily parallel.  Then, it follows that
\begin{equation}\label{eq:flatsimons}
    \frac{1}{2}\Delta|\vp|^2 = |\nabla\vp|^2 + \sum \lambda^\alpha_i \left(\tr{\vp^\alpha}\right)_{ii} + \frac{1}{2}\sum  R^i_{jij}\left(\lambda^\alpha_i - \lambda^\alpha_j\right)^2.
\end{equation}
\end{lemma}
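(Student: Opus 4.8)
The plan is to obtain both displays from a single Bochner--Weitzenb\"ock computation for a scalar-valued Codazzi tensor, applied to the component $\vp^{n+1}$ in the first case and to every component $\vp^\alpha$ in the second. I would begin from the pointwise product rule for the rough Laplacian,
\[
\tfrac{1}{2}\Delta|\vp^{n+1}|^2 = |\nabla\vp^{n+1}|^2 + \sum_{i,j,k}\vp^{n+1}_{ij}\,\vp^{n+1}_{ij,kk},
\]
in which the first summand already collects the squared first derivatives. Everything then reduces to rewriting the second-derivative contraction $\sum_k\vp^{n+1}_{ij,kk}$.

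First I would invoke the Codazzi equation $\vp^\alpha_{ij,k}=\vp^\alpha_{ik,j}$ to trade $\sum_k\vp^{n+1}_{ij,kk}=\sum_k\vp^{n+1}_{ik,jk}$, then apply the commutation rule of Lemma~\ref{lem:commutator} to interchange the two trailing differentiation indices, producing $\sum_k\vp^{n+1}_{ik,kj}$ together with contractions of $\vp^{n+1}$ against $R$ and $\pR$. A second use of Codazzi, combined with symmetry in the lower indices, identifies $\sum_k\vp^{n+1}_{ik,k}=(\tr\vp^{n+1})_{,i}$, so that $\sum_k\vp^{n+1}_{ik,kj}$ is the Hessian $(\tr\vp^{n+1})_{,ij}$. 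Under the parallel mean curvature hypothesis the trace $\tr\vp^{n+1}=n\phi$ is constant, so this Hessian vanishes and only the curvature terms remain; in the flat-normal-bundle case it is retained and produces the middle summand $\sum\lambda^\alpha_i(\tr\vp^\alpha)_{ii}$. The function of the two structural hypotheses is precisely to decouple the normal index: parallel mean curvature yields $\omega^{n+1}_\beta=0$, so the normal-connection terms in $\nabla\vp^{n+1}$ disappear, and $\pR^{n+1}_{\alpha ij}=0$, so the $\pR$ contraction in the commutator drops; flatness of $\tmperp$ instead kills $\pR$ outright and, after passing to a parallel normal frame, lets each $\vp^\alpha$ be treated independently.

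With the normal direction decoupled, $\vp^{n+1}$ (respectively each $\vp^\alpha$) behaves exactly like the second fundamental form of a hypersurface, and by the diagonalization noted just before the Lemma I may assume $\vp^{n+1}_{ij}=\lambda^{n+1}_i\delta_{ij}$ at the point in question. Substituting this into $\sum_{i,j}\vp^{n+1}_{ij}\vp^{n+1}_{ij,kk}$ collapses the contraction to $\sum_i\lambda^{n+1}_i\,\vp^{n+1}_{ii,kk}$, whose curvature part is a contraction of the diagonal entries against the intrinsic tensor $R^m_{ikj}$. The final step is to rearrange this using the antisymmetries and pair symmetry of $R$, recasting the contraction into the manifestly symmetric form $\tfrac12\sum_{i,j}R^i_{jij}(\lambda^{n+1}_i-\lambda^{n+1}_j)^2$. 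I expect this symmetrization --- keeping the curvature index placements consistent with the conventions of Lemma~\ref{lem:commutator} and checking that the cross terms reorganize into a perfect square --- to be the principal bookkeeping obstacle, alongside the careful verification that the normal-bundle connection and curvature terms genuinely vanish under each hypothesis, so that the scalar Codazzi computation applies verbatim in both settings.
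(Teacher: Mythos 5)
Your proposal is correct and follows essentially the same route as the paper: the product rule for $\Delta|\vp|^2$, two applications of the Codazzi equation to telescope $\sum_k\vp^\alpha_{ij,kk}$ into the Hessian of the trace plus the commutator terms of Lemma~\ref{lem:commutator}, vanishing of the $\pR$ contraction under either hypothesis (and of the trace Hessian under parallel mean curvature), and finally diagonalization with skew-symmetrization in $i,j$ to produce $\tfrac12\sum R^i_{jij}(\lambda_i^\alpha-\lambda_j^\alpha)^2$. No gaps; the bookkeeping you flag is exactly what the paper carries out.
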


\begin{proof}
Using Lemma~\ref{lem:commutator}, the Laplacian $\Delta\vp$ can be expressed componentwise as
\begin{equation*}
\begin{split}
    \Delta \vp_{ij}^\alpha &= \sum_k \vp^\alpha_{ij,kk} = \sum_k \left(\vp^\alpha_{ij,kk}-\vp^\alpha_{ik,jk}\right) \\
    &\quad+ \sum_k \left(\vp^\alpha_{ik,jk} - \vp^\alpha_{ik,kj}\right) + \sum_k \left(\vp^\alpha_{ik,kj} - \vp^\alpha_{kk,ij}\right) + \sum_k\vp^\alpha_{kk,ij} \\
    &= \sum_k \left(\vp^\alpha_{ij,kk} - \vp^\alpha_{ik,jk}\right) + \sum_k \left(\vp^\alpha_{ik,kj} - \vp^\alpha_{kk,ij}\right) + \left(\tr{\vp^\alpha}\right)_{,ij} \\
    &\quad+ \sum_{\beta,k,m} \left(\vp^\beta_{ik}\pR^\alpha_{\beta kj} - \vp^\alpha_{km}R^m_{ikj} - \vp^\alpha_{im}R^m_{kkj} \right).
\end{split}
\end{equation*}
Using the fact that $\vp$ satisfies Codazzi equations, this expression reduces further to
\begin{equation*}
    \Delta\vp^\alpha_{ij} = \left(\tr{\vp}^\alpha\right)_{,ij} + \sum_{\beta,k,m} \left(\vp^\beta_{ik}\pR^\alpha_{\beta kj} - \vp^\alpha_{km}R^m_{ikj} - \vp^\alpha_{im}R^m_{kkj} \right).
\end{equation*}
Now, if $\tmperp$ is flat, $\pR\equiv 0$ and we may write $\vp^\alpha_{ij} = \lambda_i^\alpha \delta_{ij}$ for all $\alpha,i,j$.  In this case, 
\begin{equation*}
\begin{split}
    \frac{1}{2}\Delta|\vp|^2 &= |\nabla\vp|^2 + \sum \vp^\alpha_{ij}\Delta\vp^\alpha_{ij} \\
    &= |\nabla\vp|^2 + \sum \vp_{ij}^\alpha\left(\tr{\vp^\alpha} \right)_{,ij} - \sum \left(\vp_{ij}^\alpha\vp^\alpha_{km}R^m_{ikj} + \vp_{ij}^\alpha\vp^\alpha_{im}R^m_{kkj}\right) \\
    &= |\nabla\vp|^2 + \sum \lambda_i^\alpha\left(\tr{\vp^\alpha} \right)_{,ii} - \sum\left( \lambda_i^\alpha\lambda_j^\alpha R^i_{jij} + \left(\lambda_i^\alpha\right)^2 R^i_{jji}\right) \\
    &= |\nabla\vp|^2 + \sum \lambda_i^\alpha\left(\tr{\vp^\alpha} \right)_{,ii} + \frac{1}{2}\sum R^i_{jij}\left(\lambda_i^\alpha - \lambda_j^\alpha\right)^2,
\end{split}
\end{equation*}
where the last line follows from skew-symmetrization in $i,j$.  On the other hand, if $\*\Phi$ is parallel in the (not necessarily flat) bundle $\tmperp$, it follows that there is an orthogonal transformation of $TM$ such that $\varphi^{n+1}_{ij} = \lambda^{n+1}_i \delta_{ij}$, and a completely similar calculation yields the desired expression.
\end{proof}

Equations (\ref{eq:MCVsimons}) and (\ref{eq:flatsimons}) represent an application of the Bochner/Simons/Chern technique mentioned previously, and are a generalization of relationships known to hold in the case of the second fundamental form \cite{yau1975,zheng1997}.  To immediately illustrate the utility of this approach, it is possible to extract the following generalization of \cite[Corollary 2.8]{simon1981}.  
\begin{proposition}
Suppose $M$ is closed with positive sectional curvature, and $\tmperp$ is flat.  If $\vp,\psi$ are Codazzi tensors on $M$ with $\tr{\vp^\alpha} = \tr{\psi^\alpha}$ for all $\alpha$, then $\vp = \psi$.
\end{proposition}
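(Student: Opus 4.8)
The plan is to linearize the problem by forming the difference $\theta = \vp - \psi$ and showing directly that it vanishes. Since the Codazzi equation $\vp^\alpha_{ij,k} = \vp^\alpha_{ik,j}$ is linear, $\theta$ is again a Codazzi tensor on $M$, and the hypothesis $\tr{\vp^\alpha} = \tr{\psi^\alpha}$ gives $\tr{\theta^\alpha} = 0$ for every $\alpha$. Thus $\theta$ is a trace-free Codazzi tensor, and the goal becomes the simpler statement that a trace-free Codazzi tensor on a closed manifold of positive sectional curvature with flat normal bundle must be identically zero.

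First I would apply the flat-normal-bundle identity (\ref{eq:flatsimons}) to $\theta$. Because $\tmperp$ is flat, the shape operators $\theta^\alpha$ diagonalize simultaneously, say $\theta^\alpha_{ij} = \mu^\alpha_i\,\delta_{ij}$, and since each $\tr{\theta^\alpha}\equiv 0$, the middle term $\sum\lambda^\alpha_i(\tr{\theta^\alpha})_{,ii}$ in (\ref{eq:flatsimons}) drops out entirely. This leaves the clean identity
\[
\frac{1}{2}\Delta|\theta|^2 = |\nabla\theta|^2 + \frac{1}{2}\sum R^i_{jij}\bigl(\mu^\alpha_i - \mu^\alpha_j\bigr)^2.
\]

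Next I would integrate over $M$. Since $M$ is closed, the divergence theorem gives $\int_M \Delta|\theta|^2 = 0$, so
\[
0 = \int_M |\nabla\theta|^2 + \frac{1}{2}\int_M \sum R^i_{jij}\bigl(\mu^\alpha_i - \mu^\alpha_j\bigr)^2.
\]
The crux is that both integrands are pointwise nonnegative: the gradient term obviously, and the curvature term because \emph{positive} sectional curvature forces $R^i_{jij}>0$ for every $i\neq j$ (and for $i=j$ the factor $(\mu^\alpha_i - \mu^\alpha_j)^2$ vanishes). A sum of nonnegative integrals that vanishes forces each summand to vanish, so $\nabla\theta = 0$ and, more to the point, $\mu^\alpha_i = \mu^\alpha_j$ for all $i,j,\alpha$.

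Finally I would close the argument by combining the eigenvalue equality with trace-freeness: if all eigenvalues of $\theta^\alpha$ coincide, their common value $\mu^\alpha$ satisfies $n\mu^\alpha = \tr{\theta^\alpha} = 0$, whence $\mu^\alpha = 0$ and $\theta^\alpha = 0$ for every $\alpha$; that is, $\vp = \psi$. I do not anticipate any serious obstacle in this argument. The single point deserving care is that strict positivity of the sectional curvature—rather than the mere nonnegativity assumed in Theorem~\ref{thm:main}—is exactly what upgrades the conclusion from ``$\theta$ is parallel'' to ``$\theta$ vanishes,'' since under strict positivity the curvature integral alone already forces all eigenvalues of each $\theta^\alpha$ to agree.
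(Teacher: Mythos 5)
Your argument is correct and is essentially identical to the paper's own proof: both form the trace-free Codazzi difference tensor, apply the flat-normal-bundle Simons identity (\ref{eq:flatsimons}), integrate over the closed manifold, and use strict positivity of $R^i_{jij}$ together with trace-freeness to force all eigenvalues to vanish. No gaps.
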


\begin{proof}
Consider $\xi = \vp - \psi$.  This tensor is obviously Codazzi and satisfies $\tr{\xi^\alpha} = 0$ for all $\alpha$, so assuming a frame which diagonalizes $\xi$,  Lemma~\ref{lem:simons} shows that
\[\frac{1}{2}\Delta|\xi|^2 = |\nabla\xi|^2 + \frac{1}{2}\sum R^i_{jij}\left(\lambda^\alpha_i - \lambda^\alpha_j\right)^2.\]
Since $M$ is closed, Stokes' theorem implies 
\[0 = \int|\nabla \xi|^2 + \int R^i_{jij}\left(\lambda^\alpha_i - \lambda^\alpha_j\right)^2,\]
so that all eigenvalues of $\xi$ are constant and $\lambda_i^\alpha = \lambda_j^\alpha = c^\alpha$ for all $\alpha,i,j$.  Since, each $\xi^\alpha$ is trace-free, it must hold that each $c^\alpha = 0$.  Hence, $\xi^\alpha \equiv 0$, so that $\vp^\alpha = \psi^\alpha$ for all $\alpha$, yielding the conclusion.
\end{proof}

\section{Proof and corollaries}
The goal of this section is to prove Theorem~\ref{thm:main} and extract some Corollaries which give insight into the strucure of submanifolds admitting Codazzi tensors in the presence of additional conditions.  First, recall the following Lemma from \cite{derdzinski1981}, which conveys spectral information about Codazzi tensors in the general case. 

\begin{lemma}[Derdzinski]\label{lem:andrzej}
Suppose $A = A^i_j\, \omega^j \otimes \*e_i$ is a Codazzi tensor in (1,1)-form on the Riemannian manifold $M$ and $\*u,\*v$ are eigenvectors of $A$ with eigenvalue $\lambda$.  Then, 
\[ A\left(\nabla_\*v\*u\right) = \lambda \nabla_\*v\*u + d\lambda(\*v)\*u - \langle \*u,\*v \rangle \nabla \lambda. \]
In particular, if $\lambda$ is constant, then the submanifold $M_\lambda \subset M$ tangent to the eigenvector distribution $V_\lambda$ is totally geodesic.
\end{lemma}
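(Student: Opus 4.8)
The plan is to work entirely with the $(1,1)$-tensor $A$ together with the auxiliary $(0,3)$-tensor $C(X,Y,Z) := \langle (\nabla_X A)Y, Z\rangle$, exploiting the fact that $C$ is totally symmetric. First I would record the two symmetries: the Codazzi hypothesis $(\nabla_X A)Y = (\nabla_Y A)X$ gives symmetry in the first two slots, while self-adjointness of $A$ (hence of each $\nabla_X A$, since $\nabla$ is metric) gives symmetry in the last two slots. Together these make $C$ symmetric in all three arguments, which is the engine of the whole computation.

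Next, expanding $(\nabla_\*v A)\*u = \nabla_\*v(A\*u) - A(\nabla_\*v\*u)$ and using $A\*u = \lambda\*u$ with the Leibniz rule yields
\[ A(\nabla_\*v\*u) = \lambda\,\nabla_\*v\*u + d\lambda(\*v)\,\*u - (\nabla_\*v A)\*u. \]
So it remains only to identify the term $(\nabla_\*v A)\*u$. I would do this slot by slot: pairing against an arbitrary $\*w$ converts it to $C(\*v,\*u,\*w)$, and total symmetry rewrites this as $C(\*w,\*u,\*v) = \langle (\nabla_\*w A)\*u, \*v\rangle$. Expanding $(\nabla_\*w A)\*u$ by Leibniz and then using self-adjointness of $A$ together with $A\*v = \lambda\*v$, the two terms proportional to $\langle \nabla_\*w\*u,\*v\rangle$ cancel, leaving exactly $C(\*w,\*u,\*v) = d\lambda(\*w)\langle\*u,\*v\rangle = \langle\nabla\lambda,\*w\rangle\langle\*u,\*v\rangle$. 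Since $\*w$ is arbitrary, this identifies $(\nabla_\*v A)\*u$ through its inner product with every vector, and substituting back produces the stated formula.

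The only subtle point\textemdash and the step I expect to require the most care\textemdash is the bookkeeping in the cancellation that produces the $\langle\*u,\*v\rangle\nabla\lambda$ term: one must apply total symmetry to move the \emph{active} derivative slot onto the test vector $\*w$, and then invoke both $A\*v = \lambda\*v$ and the self-adjointness of $A$ at the right moment so that the $\nabla_\*w\*u$ contributions cancel rather than accumulate. Everything else is a routine application of the Leibniz rule.

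For the final assertion, specializing to constant $\lambda$ kills $d\lambda$ and $\nabla\lambda$, so the identity collapses to $A(\nabla_\*v\*u) = \lambda\,\nabla_\*v\*u$; that is, $\nabla_\*v\*u \in V_\lambda$ whenever $\*u,\*v \in V_\lambda$. This shows $V_\lambda$ is autoparallel, and in particular involutive (since $[\*u,\*v] = \nabla_\*u\*v - \nabla_\*v\*u$ also lies in $V_\lambda$), so its leaves $M_\lambda$ exist by Frobenius; because the component of $\nabla_\*v\*u$ normal to $V_\lambda$ vanishes, the second fundamental form of each leaf is identically zero, i.e. $M_\lambda$ is totally geodesic.
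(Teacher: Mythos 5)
Your proof is correct and follows essentially the same route as the paper's: expand $A(\nabla_\*v\*u)$ by the Leibniz rule and then evaluate $(\nabla_\*v A)\*u$ using the total symmetry of $\nabla A$ together with the eigenvector conditions and self-adjointness of $A$ (the paper does this in index notation, writing $A^i_{j,k}u^jv^k = A^j_{k,i}u^jv^k = \langle\*u,\*v\rangle\lambda_{,i}$, which is precisely your slot-swapping argument with the test vector $\*w$). Your handling of the final assertion, via autoparallelism and hence involutivity of $V_\lambda$, is if anything slightly more careful than the paper's.
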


\begin{proof}
Since this result is used in the argument of Theorem~\ref{thm:main}, a proof is provided in the Appendix.
\end{proof}

The proof of Theorem~\ref{thm:main} will now be given.  The argument is thematically inspired by \cite[Theorem 9]{yau1974}, and uses the holonomy-invariance of the eigenspaces of $\vp^{n+1}$ in order to construct an appropriate De Rham decomposition of $M$.  Though this construction is generically local, simple-connectedness removes its dependence on homotopy and a global conclusion is made possible (c.f. \cite[Chapter 4.6]{kobayashi1963}).

\begin{proof}[Proof of Theorem~\ref{thm:main}]
Since $\*\Phi$ is parallel in $\tmperp$, it follows from Stokes' theorem and Lemma~\ref{lem:simons} that
\[0 = \int_M |\nabla\vp^{n+1}|^2 + \sum R^i_{jij}\left(\lambda_i^{n+1} - \lambda_j^{n+1}\right)^2 = 0,\]
so that the hypotheses imply the first statement.
It follows that each $\lambda_i^{n+1}$ is constant, and the structure equations imply that 
\[0 = d\varphi^{n+1}_{ij} = \left(\lambda_i^{n+1} - \lambda_j^{n+1}\right) \omega^i_j, \]
for all $i,j$.  Denoting $\lambda_k^{n+1}:= \lambda_k$, it follows that $\omega^i_j = 0$ when $\lambda_i \neq \lambda_j$.  Ordering the eigenvalues so that \[\lambda_1 = \lambda_2 = ... = \lambda_{n_1} > \lambda_{n_1 + 1} = ... = \lambda_{n_2} > ... > \lambda_{n_{k-1} + 1} = ... = \lambda_{n_k} > ... = \lambda_n,\]
the above discussion implies that for $n_k < i \leq n_{k+1}$,
\[ d\omega^i = -\sum_{j=n_k + 1}^{n_{k+1}} \omega^i_j \wedge \omega^j.\]
Therefore, as $k$ varies, the distributions of $TM$ defined by $\{\omega^{n_k + 1} = .... = \omega^{n_{k+1}} = 0\}$ are closed under exterior differentiation, hence integrate to mutually orthogonal foliations of $M$.  Moreover, since each $\lambda_i$ is constant, Lemma~\ref{lem:andrzej} shows that the eigenspaces corresponding to distinct eigenvalues are invariant under parallel translation and therefore totally geodesic in $M$. Since $M$ is geodesically complete when compact, De Rham's decomposition theorem implies that the maximal integral manifolds of each eigenspace distribution define a product decomposition $M = M_1 \times M_2 \times ... \times M_l$ where $1\leq l\leq n$ is the number of distinct eigenvalues $\lambda_i$ (c.f. \cite[Chapter 4]{kobayashi1963}). By the same theorem, this decomposition is global when $M$ is  simply connected.
\end{proof}


\begin{remark}
When $\vp = h$, $\*\Phi$ is the mean curvature vector $\*H$, we partially recover \cite[Theorem 9]{yau1975}.  Indeed, in this case it can be further demonstrated that $M = M_1 \times M_2 \times ... \times M_k$ such that each $M_i$ is minimal in a totally umbilical $N_i$ of positive codimension in $\mathbb{M}(c)$.
\end{remark}

The remainder is devoted to corollaries.  First, recall that a manifold $M$ is said to be \textit{indecomposable} provided its tangent space contains no proper and nondegenerate subspace which is invariant under parallel translation. 

\begin{corollary}\label{cor:indecomp}
Suppose $M \subset \mathbb{M}(c)$ is an immersed submanifold with positive sectional curvature which admits a Codazzi tensor $\vp$ with parallel mean curvature.  Then, $\vp^{n+1}$ is a constant multiple of the metric tensor and $M$ is indecomposable.
\end{corollary}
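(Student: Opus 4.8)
The plan is to read off the spectrum of $\vp^{n+1}$ from Theorem~\ref{thm:main} and then to establish indecomposability directly from the curvature hypothesis. Since $M$ is closed with nonnegative sectional curvature (being positively curved) and $\vp$ has parallel mean curvature, Theorem~\ref{thm:main} applies and gives
\[ \sum_{i,j} R^i_{jij}\left(\lambda_i^{n+1} - \lambda_j^{n+1}\right)^2 = 0. \]
The crucial point is that, in the orthonormal frame $\*e_1,\dots,\*e_n$, the quantity $R^i_{jij}$ is the sectional curvature of the plane spanned by $\*e_i$ and $\*e_j$, so positivity of the sectional curvature forces $R^i_{jij} > 0$ whenever $i \neq j$. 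Every summand above is then nonnegative, and the vanishing of the sum compels $\lambda_i^{n+1} = \lambda_j^{n+1}$ for all $i, j$. Because Theorem~\ref{thm:main} simultaneously gives $|\nabla\vp^{n+1}| = 0$, the common eigenvalue $\lambda$ is constant, whence $\vp^{n+1}_{ij} = \lambda\,\delta_{ij}$ and $\vp^{n+1}$ is a constant multiple of the metric tensor.

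To obtain indecomposability I would argue by contradiction through the De Rham splitting. Suppose $TM$ contained a proper nondegenerate subspace $V$ invariant under parallel translation; then $V^\perp$ is likewise parallel and $TM = V \oplus V^\perp$. By the De Rham decomposition theorem (c.f. \cite[Chapter 4]{kobayashi1963}) $M$ is locally isometric to a Riemannian product $N_1 \times N_2$ with $TN_1 = V$ and $TN_2 = V^\perp$. In any such product the curvature tensor annihilates mixed planes, so for nonzero $\*u \in V$ and $\*w \in V^\perp$ the sectional curvature $K(\*u,\*w)$ vanishes, contradicting the standing assumption of positive sectional curvature. Hence no such $V$ exists and $M$ is indecomposable.

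The hard part here is conceptual rather than computational: the two assertions draw on logically separate consequences of positive curvature. Rigidity of $\vp^{n+1}$ flows from the Bochner/Simons identity of Lemma~\ref{lem:simons} (as repackaged in Theorem~\ref{thm:main}), while indecomposability is a purely holonomy-theoretic fact that would hold for \emph{any} positively curved closed $M$, independent of $\vp$. I would also take care to fix the sign convention that makes $R^i_{jij} > 0$ equivalent to positive sectional curvature and to observe that the diagonal terms $i = j$ drop out of the displayed sum automatically, so that only the genuine two-planes $\*e_i \wedge \*e_j$ with $i \neq j$ contribute to the argument.
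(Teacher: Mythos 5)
Your proposal is correct and follows essentially the same route as the paper: both parts first read the constancy and equality of the eigenvalues of $\vp^{n+1}$ off Theorem~\ref{thm:main} using positivity of $R^i_{jij}$, and then derive indecomposability by contradiction from a parallel proper subspace forcing a local product splitting with vanishing mixed sectional curvatures. The only cosmetic difference is that you invoke the De Rham theorem and the standard fact that Riemannian products annihilate mixed planes, whereas the paper reaches the same vanishing through the structure equations and the total geodesy of the factor; these are equivalent.
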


\begin{proof}
Since $M$ has positive sectional curvature, Theorem~\ref{thm:main} shows that all eigenvalues of $\vp^{n+1}$ are constant and identical, say equal to $\lambda$, and that $TM$ is spanned by the eigenvectors of $\vp^{n+1}$ with eigenvalue $\lambda$.  Therefore, $\vp^{n+1}_{ij} = \lambda\,\delta_{ij}$ and the first conclusion follows.  Now, if there were a proper nondegenerate subspace of $TM$ left invariant by parallel translation, say $TM_c$, then the constancy of $\lambda$ would imply that $TM_c$ integrates to a totally geodesic submanifold of $M$ so that locally $M = M_c \times (M\setminus M_c)$.  Let $\{\omega^I\}$ resp. $\{\omega^i\}$ be the dual forms to $M$ resp. $M_c$.  Then, if $K$ is an index such that $\omega^K = 0$ on $M_c$, total geodesy implies that $\omega^K_i = \sum_j h_{ij}^K\,\omega^j = 0$, hence the structure equations show that the sectional curvatures $R^K_{ijl}$ must vanish on $M_c$, a contradiction.
\end{proof}

In addition to indecomposability, there is the following consequence when $M$ is a hypersurface with strictly positive sectional curvature. 

\begin{corollary}\label{cor:positive}
Let $M^n \subset \mathbb{M}^{n+1}(c)$ be a closed hypersurface with positive sectional curvature.  Then, $M$ is indecomposable and any Codazzi tensor $\varphi = \varphi_{ij}\,\omega^i \otimes \omega^j$ which has constant trace is a constant multiple of the metric.
\end{corollary}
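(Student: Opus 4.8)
The plan is to reduce the claim to Corollary~\ref{cor:indecomp} by observing that, in codimension one, the constant-trace hypothesis is already equivalent to $\vp$ having parallel mean curvature in the sense of Definition~\ref{def:parallelMC}. Once this equivalence is in hand, nothing further needs to be proved: a closed hypersurface of positive sectional curvature carrying a Codazzi tensor with parallel mean curvature falls directly under the hypotheses of that corollary.

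Concretely, I would first record that $p = 1$ here, so that $\tmperp$ is a line bundle, automatically flat, spanned by the single normal field $\*e_{n+1}$; thus the tensor $\vp = \vp_{ij}\,\omega^i\otimes\omega^j$ has exactly one normal component $\vp^{n+1} = \vp$, with mean curvature vector $\*\Phi = \phi\,\*e_{n+1}$ and $\phi = (1/n)\tr{\vp}$. I would then check the two requirements of Definition~\ref{def:parallelMC} in turn. The condition $\omega^{n+1}_\beta = 0$ holds trivially, since the only admissible value of $\beta$ is $n+1$ and $\omega^{n+1}_{n+1} \equiv 0$ by antisymmetry of the connection forms in an orthonormal frame; and the condition $d\phi = 0$ is exactly the assumption that $\tr{\vp}$ is constant. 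Hence $\vp$ has parallel mean curvature, and Corollary~\ref{cor:indecomp} immediately yields that $\vp^{n+1} = \vp$ is a constant multiple of the metric and that $M$ is indecomposable.

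The main (and essentially the only) point requiring attention is this equivalence between constant trace and parallel mean curvature, which is special to codimension one: it rests on the triviality of the rank-one normal connection rather than on any curvature estimate, and it is what permits the constant-trace hypothesis to stand in for the a priori stronger parallelism assumption of Corollary~\ref{cor:indecomp}. With that identification made, the remaining content — the rigidity of $\vp$ and the indecomposability of $M$ — is inherited wholesale from the earlier corollary, whose proof in turn draws the required completeness and holonomy-invariance input from Theorem~\ref{thm:main} via the closedness of $M$.
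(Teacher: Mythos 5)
Your proposal is correct and follows essentially the same route as the paper: both rest on the observation that for a hypersurface the rank-one normal bundle makes constant trace equivalent to parallel mean curvature in the sense of Definition~\ref{def:parallelMC}, after which the conclusions follow from the earlier results (the paper cites Theorem~\ref{thm:main} for the multiple-of-the-metric claim and Corollary~\ref{cor:indecomp} for indecomposability, whereas you draw both from Corollary~\ref{cor:indecomp} directly, which is an equivalent and slightly tidier packaging).
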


\begin{proof}
Since $M$ is a hypersurface, its normal bundle is trivial and the vector $\*\Phi = \sum \vp_{ii}\,\*e_{n+1}$ is parallel if and only if $\vp$ has constant trace.  By Theorem~\ref{thm:main}, all eigenvalues of $\varphi$ are constant and equal.  Therefore, there is a constant function $c:M \to \mathbb{R}$ such that $\varphi_{ij} = c \,\delta_{ij}$ and the second conclusion follows.  Indecomposibility follows immediately from Corollary~\ref{cor:indecomp}.
\end{proof}

\begin{remark}
In the case that the immersion is trivially represented by $\mathrm{Id}_M: M \to M$, $\*\Phi$ is automatically parallel in $\tmperp = \varnothing$ and the argument in Corollary~\ref{cor:positive} recovers the well-known fact that any Codazzi tensor with constant trace on a closed Riemannian manifold with positive sectional curvature must be a multiple of the metric (c.f. \cite[Theorem 16.9]{besse2007}).
\end{remark}

\begin{remark}
When $\vp = h$, then Corollary~\ref{cor:positive} applies to any closed CMC hypersurface $M$ with positive sectional curvature.  Moreover, the conclusion recovers the fact that $M$ must be totally umbilical and isometric to the standard sphere (c.f. \cite{yano1971,cheng1977,chen2010}).
\end{remark}

Finally, recall that a manifold $M$ is said to have \textit{harmonic curvature} when its Ricci tensor $\mathrm{Ric}_{ij} = \sum_k R^k_{ikj}$ satisfies the Codazzi equation.  Such manifolds arise in the study of Yang-Mills theory (c.f. \cite{besse2007}), and are intimately related to the \textit{Einstein manifolds} which have diagonal Ricci tensor.  Using the work above, this connection is highlighted in the following corollary.

\begin{corollary}
Let $M$ be a closed, simply connected hypersurface with parallel Ricci tensor and nonnegative sectional curvature.  Then, $M= M_1 \times M_2 \times ... \times M_l$ such that each $M_i$ is spanned by the eigenvectors of $\mathrm{Ric}$ with eigenvalue $\lambda_i$.  Moreover, if the sectional curvature of $M$ is strictly positive, then all eigenvalues of $\mathrm{Ric}$ are the same and $M$ is both indecomposable and Einstein.
\end{corollary}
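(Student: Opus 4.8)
The plan is to recognize the parallel Ricci tensor as a Codazzi tensor with parallel mean curvature in the sense of Definition~\ref{def:parallelMC}, after which the decomposition follows immediately from Theorem~\ref{thm:main} and the rigidity from Corollary~\ref{cor:positive}. First I would record the two simplifications afforded by codimension one. Since $M$ is a hypersurface, the normal bundle $\tmperp$ is a real line bundle spanned by a unit normal $\*e_{n+1}$; consequently the only normal connection form $\omega^{n+1}_{n+1}$ vanishes identically, $\tmperp$ is automatically flat, and $\pR \equiv 0$. I would then identify the Ricci tensor with the section $\vp = \sum \mathrm{Ric}_{ij}\,\omega^i \otimes \omega^j \otimes \*e_{n+1}$ of $T^*M \otimes T^*M \otimes \tmperp$, so that $\vp^{n+1}_{ij} = \mathrm{Ric}_{ij}$. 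Because $\mathrm{Ric}$ is parallel, every one of its covariant derivatives vanishes, so in particular $\vp^{n+1}_{ij,k} = \vp^{n+1}_{ik,j} = 0$; together with the vanishing normal connection this shows that $\vp$ satisfies the Codazzi equation relative to $\mathbb{M}(c)$.

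Second I would verify the parallel mean curvature hypothesis. The associated mean curvature vector is $\*\Phi = \tfrac{1}{n}(\tr \vp^{n+1})\,\*e_{n+1} = \tfrac{1}{n}S\,\*e_{n+1}$, where $S = \tr \mathrm{Ric}$ is the scalar curvature. Tracing $\nabla \mathrm{Ric} = 0$ gives $dS = 0$, so $\phi = S/n$ is constant, and since the only normal connection form already vanishes we have $\omega^{n+1}_\beta = 0$ for all $\beta$. This is precisely the condition of Definition~\ref{def:parallelMC}, so $\vp = \mathrm{Ric}$ is a Codazzi tensor with parallel mean curvature on the closed, simply connected hypersurface $M$.

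Finally I would apply the established results. Since $M$ has nonnegative sectional curvature, Theorem~\ref{thm:main} yields the product decomposition $M = M_1 \times \cdots \times M_l$, where $l$ is the number of distinct eigenvalues of $\vp^{n+1} = \mathrm{Ric}$ and each $TM_i$ is spanned by the eigenvectors of eigenvalue $\lambda_i$; simple-connectedness promotes this to a global isometry. For the strictly positive case, $\vp = \mathrm{Ric}$ is a Codazzi tensor of constant trace $S$, so Corollary~\ref{cor:positive} forces $\mathrm{Ric}_{ij} = \lambda\,\delta_{ij}$ for a constant $\lambda$, i.e. $M$ is Einstein, while the same corollary (or Corollary~\ref{cor:indecomp}) gives indecomposability.

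I expect no substantial obstacle beyond the bookkeeping of the first step: the one genuinely delicate point is checking that the Codazzi equation ``with respect to $\mathbb{M}(c)$'', which a priori involves the ambient geometry through the normal connection, really does reduce to the intrinsic statement $\nabla \mathrm{Ric} = 0$. This reduction is exactly what the one-dimensionality of $\tmperp$ delivers, so once the identification of $\mathrm{Ric}$ with a $\tmperp$-valued tensor is set up correctly, the remaining steps are direct invocations of the preceding machinery.
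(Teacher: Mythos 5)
Your proposal is correct and follows essentially the same route as the paper: recognize that $\nabla\,\mathrm{Ric}=0$ makes $\mathrm{Ric}$ a Codazzi tensor with constant trace (hence parallel ``mean curvature'' in the sense of Definition~\ref{def:parallelMC}), then invoke Theorem~\ref{thm:main}, Corollary~\ref{cor:indecomp}, and Corollary~\ref{cor:positive}. The only difference is that you spell out the bookkeeping of identifying $\mathrm{Ric}$ with a $\tmperp$-valued tensor in codimension one, which the paper leaves implicit.
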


\begin{proof}
First, notice that the assumption on the Ricci tensor guarantees that $0 = \nabla_k R_{ij} = \nabla_i R_{jk} = \nabla_j R_{ik}$, so $\mathrm{Ric}$ is a Codazzi tensor and $M$ has constant scalar curvature. The rest now follows from Theorem~\ref{thm:main}, Corollary~\ref{cor:indecomp}, and Corollary~\ref{cor:positive}.
\end{proof}

\bibliographystyle{abbrv}

\providecommand{\bysame}{\leavevmode\hbox to3em{\hrulefill}\thinspace}
\providecommand{\MR}{\relax\ifhmode\unskip\space\fi MR }
\providecommand{\MRhref}[2]{%
  \href{http://www.ams.org/mathscinet-getitem?mr=#1}{#2}
}
\providecommand{\href}[2]{#2}

\section*{Appendix}
For convenience, the calculations necessary for the results obtained in the body are presented here.  Note that the Einstein summation convention is employed throughout, so that indices appearing both up and down in a tensor expression are implicitly summed over their appropriate range. 

\begin{proof}[Proof of Lemma~\ref{lem:stdeqns}]
To obtain the well-known Gauss, Codazzi, and Ricci equations, it suffices to compare the structure equations on $M$ with the pullbacks of those on $\mathbb{M}(c)$.  More precisely, it follows from the structure equations of $M$ that
\begin{equation*}
    d\omega^i_j = -\omega^i_k\wedge\omega^k_j + \frac{1}{2}R^i_{jkl}\,\omega^k\wedge\omega^l.
\end{equation*}
On the other hand, the structure equations on $\mathbb{M}(c)$ pull back to yield
\begin{equation*}
\begin{split}
    \*r^*(d\omega^i_j) &= -\omega^i_K\wedge\omega^K_j + \frac{1}{2}\tilde{R}^i_{jKL}\,\omega^K\wedge\omega^L \\ 
    &= -\omega^i_k\wedge\omega^k_j - \omega^i_\alpha\wedge\omega^\alpha_j + \frac{1}{2}\tilde{R}^i_{jkl}\,\omega^k\wedge\omega^l \\
    &= -\omega^i_k \wedge \omega^k_j + h_{ik}^\alpha h_{jl}^\alpha\,\omega^k \wedge \omega^l + \frac{1}{2}\tilde{R}^i_{jkl}\,\omega^k\wedge\omega^l,
\end{split}
\end{equation*}
since $\omega^\alpha = 0$ and $\omega^\alpha_i = h_{ij}^\alpha\,\omega^j$ on $M$.
Comparing these representations, it follows that
\begin{equation*}
    0 = \*r^*(d\omega^i_j) - d\omega^i_j = \left(h_{ik}^\alpha h_{jl}^\alpha + \frac{1}{2}\tilde{R}^i_{jkl} - \frac{1}{2}R^i_{jkl}\right)\omega^k\wedge\omega^l.
\end{equation*}
Now, since $\omega^k\wedge\omega^l$ is skew-symmetric in $k,l$, the coefficient functions must be also.  Skew-symmetrizing this expression and noting the symmetries of $R, \tilde{R}$ then yields
\begin{equation*}
    0 = h_{ik}^\alpha h_{jl}^\alpha - h_{il}^\alpha h_{jk}^\alpha + \tilde{R}^i_{jkl} - R^i_{jkl},
\end{equation*}
which immediately implies the Gauss equation
\begin{equation*}
    R^i_{jkl} = \tilde{R}^i_{jkl} + \sum_\alpha h_{ik}^\alpha h_{jl}^\alpha - h_{il}^\alpha h_{jk}^\alpha.
\end{equation*}
This procedure is now repeated to yield the other equations in question.  More precisely, notice that pulled back to $M$, there is
\begin{equation*}
\begin{split}
    \*r^*(d\omega^\alpha_i) &= -\omega^\alpha_J \wedge\omega^J_i + \frac{1}{2}\tilde{R}^\alpha_{iJK}\,\omega^J \wedge \omega^K \\
    &= -\omega^\alpha_j\wedge\omega^j_i - \omega^\alpha_\beta\wedge\omega^\beta_i + \frac{1}{2}\tilde{R}^\alpha_{ijk}\,\omega^j\wedge\omega^k \\
    &= h_{jk}^\alpha\,\omega^j_i\wedge\omega^k - h_{ik}^\beta\,\omega^\alpha_\beta\wedge\omega^k + \frac{1}{2}\tilde{R}^\alpha_{ijk}\,\omega^j\wedge\omega^k.
\end{split}
\end{equation*}
Recalling the definition of covariant derivative, it follows that
\begin{equation*}
    h_{ij,k}^\alpha\,\omega^k = dh_{ij}^\alpha - h_{kj}^\alpha\,\omega^k_i - h_{ik}^\alpha\,\omega^k_j + h_{ij}^\beta\,\omega^\alpha_\beta,
\end{equation*}
and so differentiation directly on $M$ also yields
\begin{equation*}
\begin{split}
    d\omega_i^\alpha &= d\left(h_{ij}^\alpha\,\omega^j\right) = dh_{ij}^\alpha\wedge\omega^j - h_{ij}^\alpha\, \omega^j_k \wedge \omega^k \\
    &= \left(h_{ij,k}^\alpha\,\omega^k + h_{kj}^\alpha\,\omega^k_i + h_{ik}^\alpha\,\omega^k_j - h_{ij}^\beta\,\omega^\alpha_\beta \right)\wedge\omega^j - h_{ij}^\alpha\,\omega^j_k\wedge\omega^k \\
    &= \left(h_{ij,k}^\alpha\,\omega^k + h_{kj}^\alpha\,\omega^k_i - h_{ij}^\beta\,\omega^\alpha_\beta \right)\wedge\omega^j.
\end{split}
\end{equation*}
Using this, direct comparison again shows
\begin{equation*}
    0 = \*r^*(d\omega^\alpha_i) - d\omega^\alpha_i = \left(h_{ij,k}^\alpha + \frac{1}{2}\tilde{R}^\alpha_{ijk}\right)\omega^j\wedge\omega^k,
\end{equation*}
from which skew-symmetrization returns
\begin{equation*}
    h_{ij,k}^\alpha - h_{ik,j}^\alpha + \tilde{R}^\alpha_{ijk} = 0.
\end{equation*}
This establishes the Codazzi equation of the submanifold $M$,
\begin{equation*}
    h_{ij,k}^\alpha = h_{ik,j}^\alpha + \tilde{R}^\alpha_{ikj}.
\end{equation*}
For the remainder, note that on $M$
\begin{equation*}
\begin{split}
    \*r^*(d\omega^\alpha_\beta) &= -\omega^\alpha_i\wedge\omega^i_\beta - \omega^\alpha_\gamma\wedge\omega^\gamma_\beta + \frac{1}{2}\tilde{R}^\alpha_{\beta jk}\,\omega^j\wedge\omega^k \\
    &= -\omega^\alpha_\gamma\wedge\omega^\gamma_\beta + \left(h_{ij}^\alpha h_{ik}^\beta + \frac{1}{2}\tilde{R}^\alpha_{\beta jk}\right)\omega^j\wedge\omega^k.
\end{split}
\end{equation*}
Moreover, the structure equations in the normal bundle to $M$ imply
\begin{equation*}
    d\omega^\alpha_\beta = -\omega^\alpha_\gamma\wedge\omega^\gamma_\beta + \frac{1}{2}\pR^\alpha_{\beta jk}\,\omega^j\wedge\omega^k.
\end{equation*}
Therefore, it follows that
\begin{equation*}
    0 = \*r^*(d\omega^\alpha_\beta) - d\omega^\alpha_\beta = \left(h_{ij}^\alpha h_{ik}^\beta + \frac{1}{2}\tilde{R}^\alpha_{\beta jk} - \frac{1}{2}\pR^\alpha_{\beta jk}\right)\omega^j\wedge\omega^k,
\end{equation*}
and skew-symmetrization in $j,k$ yields
\begin{equation*}
    h_{ij}^\alpha h_{ik}^\beta - h_{ik}^\alpha h_{ij}^\beta + \tilde{R}^{\alpha}_{\beta jk} - \pR^\alpha_{\beta jk} = 0,
\end{equation*}
which is equivalent to the Ricci equation,
\begin{equation*}
    \pR^\alpha_{\beta jk} = \tilde{R}^{\alpha}_{\beta jk} + \sum_i h_{ij}^\alpha h_{ik}^\beta - h_{ik}^\alpha h_{ij}^\beta.
\end{equation*}
This completes the calculation.
\end{proof}

\begin{proof}[Proof of Lemma~\ref{lem:commutator}]
The commutation relationship between second derivatives on $M$ is now computed.  First, notice that the definition of exterior derivative implies
\begin{equation*}
\begin{split}
    0 &= d\left(d\vp^\alpha_{ij}\right) = d\vp_{ij,k}^\alpha\wedge\omega^k + \vp_{ij,k}^\alpha\,d\omega^k + d\vp^\alpha_{kj}\wedge\omega^k_i + \vp^\alpha_{kj}\,d\omega^k_i + d\vp^\alpha_{ik}\wedge\omega^k_j \\
    &\quad+ \vp^\alpha_{ik}\,d\omega^k_j - d\vp^\beta_{ij}\wedge\omega^\alpha_\beta - \vp^\beta_{ij}\,d\omega^\alpha_\beta \\
    &= d\vp_{ij,k}^\alpha\wedge\omega^k - \vp_{ij,k}^\alpha\,\omega^k_l\wedge\omega^l + d\vp^\alpha_{kj}\wedge\omega^k_i - \vp^\alpha_{kj}\,\omega^k_l\wedge\omega^l_i - \vp^\alpha_{kj}\,\omega^k_\beta\wedge\omega^\beta_i \\
    &\quad+ \frac{1}{2}\vp_{kj}^\alpha \R_{ilm}^k\,\omega^l\wedge\omega^m + d\vp_{ik}^\alpha \wedge\omega^k_j - \vp_{ik}^\alpha\,\omega^k_l\wedge\omega^l_j - \vp^\alpha_{ik}\,\omega^k_\beta\wedge\omega^\beta_j + \frac{1}{2}\vp_{ik}^\alpha\R^k_{jlm}\,\omega^l\wedge\omega^m \\
    &\quad- d\vp_{ij}^\beta\wedge\omega^\alpha_\beta + \vp_{ij}^\beta\,\omega^\alpha_k\wedge\omega^k_\beta + \vp_{ij}^\beta\,\omega^\alpha_\gamma\wedge\omega^\gamma_\beta - \frac{1}{2}\vp_{ij}^\beta\R^\alpha_{\beta lm}\,\omega^l\wedge\omega^m.
\end{split}
\end{equation*}
Using the fact that $\omega_i^\alpha = h_{ij}^\alpha\,\omega^j$ on $M$ and relabeling indices when necessary, the above computation continues as
\begin{equation*}
\begin{split}
    0 &= \left(d\vp_{ij,k}^\alpha - \vp_{ij,l}^\alpha\,\omega^l_k - \vp_{jl,k}^\alpha\,\omega^l_i - \vp_{il,k}^\alpha\,\omega^l_j + \vp_{ij,k}^\beta\,\omega^\alpha_\beta \right)\wedge\omega^k \\
    &\quad+ \left(\vp_{jm}^\alpha h_{ml}^\beta h_{ik}^\beta + \vp_{im}^\alpha h_{ml}^\beta h_{jk}^\beta - \vp_{ij}^\beta h_{ml}^\alpha h_{mk}^\beta \right)\omega^l\wedge\omega^k \\
    &\quad+ \frac{1}{2}\left( \vp_{mj}^\alpha\R^m_{ilk} + \vp^\alpha_{im}\R^m_{jlk} - \vp^\beta_{ij}\R^\alpha_{\beta lk} \right)\omega^l\wedge\omega^k \\
    &= \bigg(\vp_{ij,kl}^\alpha + \vp_{jm}^\alpha h_{ml}^\beta h_{ik}^\beta + \vp_{im}^\alpha h_{ml}^\beta h_{jk}^\beta - \vp_{ij}^\beta h_{ml}^\alpha h_{mk}^\beta \\
    &\quad+ \frac{1}{2}\vp_{mj}^\alpha\R^m_{ilk} + \frac{1}{2}\vp^\alpha_{im}\R^m_{jlk} - \frac{1}{2}\vp^\beta_{ij}\R^\alpha_{\beta lk} \bigg)\omega^l\wedge\omega^k.
\end{split}
\end{equation*}
The desired expression now follows from skew-symmetrization in $l,k$ and applying the equations of Gauss and Ricci.  More precisely,
\begin{equation*}
\begin{split}
    0 &= \vp_{ij,kl}^\alpha - \vp_{ij,lk}^\alpha + \vp_{mj}^\alpha \left(h_{ml}^\beta h_{ik}^\beta - h_{mk}^\beta h_{il}^\beta \right) + \vp^\alpha_{im}\left(h_{ml}^\beta h_{jk}^\beta - h_{mk}^\beta h_{jl}^\beta \right) \\
    &\quad+ \vp_{ij}^\beta\left(h_{mk}^\beta h_{ml}^\alpha - h_{ml}^\beta h_{mk}^\alpha\right) + \vp^\alpha_{jm}\R^m_{ilk} + \vp_{im}^\alpha\R^m_{jlk} - \vp^\beta_{ij}\R^\alpha_{\beta lk} \\
    &= \vp_{ij,kl}^\alpha - \vp_{ij,lk}^\alpha + \vp_{mj}^\alpha\left(R^m_{ilk} - \R^m_{ilk}\right) + \vp_{im}^\alpha\left(R^m_{jlk} - \R^m_{jlk}\right) - \vp_{ij}^\beta\left(\pR^\alpha_{\beta lk} - \tilde{R}^\alpha_{\beta lk}\right) \\
    &\quad+ \vp^\alpha_{jm}\R^m_{ilk} + \vp_{im}^\alpha\R^m_{jlk} - \vp^\beta_{ij}\R^\alpha_{\beta lk} \\
    &= \vp^\alpha_{ij,kl} - \vp^\alpha_{ij,lk} - \vp^\beta_{ij}\pR^\alpha_{\beta lk} + \vp^\alpha_{mj}R^m_{ilk} + \vp^\alpha_{im}R^m_{jlk},
\end{split}
\end{equation*}
which was to be shown.  
\end{proof}

\begin{proof}[Proof of Lemma~\ref{lem:andrzej}]
First, the structure equations imply  \[\nabla_\*v\*u = d\*u(\*v) = \nabla u^i(\*v)\*e_i,\] where $\nabla u^i = u^i_j + u^j\omega_j^i$.  So, \[ A\left(\nabla_\*v\*u\right) = A^i_j\, \omega^j\left( \nabla u^k(\*v)\*e_k \right) \*e_i = A^i_j \nabla u^j(\*v)\*e_i. \]
Now, by the Leibniz rule 
\begin{align*}
     A_j^i \nabla u^j(\*v) &= \nabla\left( u^j A_j^i \right)(\*v) - u^j\nabla A^i_j(\*v) \\
     &= \nabla\left(\lambda u^j\delta_{ij} \right)(\*v) - A^i_{j,k} u^j v^k = \lambda \nabla u^i(\*v) + u^i d\lambda(\*v) - A^i_{j,k} u^j v^k. 
\end{align*}
Moreover, using the Codazzi equation and the fact that $\*u,\*v$ are eigenvectors,
\[A^i_{j,k} u^j v^k = A^j_{k,i} u^j v^k = \lambda_{,i} \delta^j_{k}u^j v^k = \langle \*u,\*v \rangle \lambda_{,i}.\]
The desired computational formula now follows.  To finish the proof, note that if $\lambda$ is constant, then $A\left(\nabla_\*v\*u\right) = \nabla_{A(\*v)}\*u = \lambda \nabla_\*v\*u$.  So, if $\nabla_\*u\*u = 0,$ then so does $A\left(\nabla_\*u\*u\right)$ and the distribution $V_\lambda$ is closed under parallel transport.  Hence, $V_\lambda$ is totally geodesic.
\end{proof}

\end{document}